\newtheorem{theorem}{Theorem}[section]
\newtheorem{corollary}[theorem]{Corollary}
\newtheorem{lemma}[theorem]{Lemma}
\newtheorem*{definition*}{Definition}
\newcommand\be{\begin{eqnarray*}}
\newcommand\ee{\end{eqnarray*}}
\newcommand\beq{\begin{equation}}
\newcommand\eeq{\end{equation}}
\newcommand\ben{\begin{eqnarray}}
\newcommand\een{\end{eqnarray}}
\def\F{\mathbb{F}}
\begin{document}
\title{Sum-product estimates over arbitrary finite fields}
\author{
Doowon Koh\thanks{Department of Mathematics, Chungbuk National University. Email: {\tt koh131@chungbuk.ac.kr}}
\and
Sujin Lee\thanks{Department of Mathematics, Chungbuk National University. Email: {\tt sujin4432@chungbuk.ac.kr}}
\and    Thang Pham\thanks{Department of Mathematics,  UCSD. Email: {\tt v9pham@ucsd.edu}}
  \and
  Chun-Yen Shen \thanks{Department of Mathematics,  National Taiwan University and National Center for Theoretical Sciences. Email: {\tt cyshen@math.ntu.edu.tw}}}

\date{}
\maketitle
\date{}
\maketitle
\begin{abstract}
In this paper we prove some results on sum-product estimates over arbitrary finite fields. More precisely, we show that for sufficiently small sets $A\subset \mathbb{F}_q$ we have
\[|(A-A)^2+(A-A)^2|\gg |A|^{1+\frac{1}{21}}.\]
This can be viewed as the Erd\H{o}s distinct distances problem for Cartesian product sets over arbitrary finite fields. We also prove that 
\[\max\{|A+A|, |A^2+A^2|\}\gg |A|^{1+\frac{1}{42}}, ~|A+A^2|\gg |A|^{1+\frac{1}{84}}.\]
\end{abstract}
\section{Introduction}
The well-known conjecture of Erd\H{o}s-Szemer\'edi \cite{es} on the sum-product problem asserts that given any finite set $A \subset \mathbb Z$, one has
$$\max\{ |A+A|, |A \cdot A|\} \geq C_{\epsilon} |A|^{2-\epsilon}$$
for any $\epsilon >0,$ where the constant $C_{\epsilon}$ only depends on $\epsilon$ and the sum and product sets are defined as 
$$A+A = \{a+b\colon a, b \in A\},$$
$$A\cdot A=\{ab\colon a,b \in A\}.$$
In other words, it implies that there is no set $A \subset \mathbb Z$ which is both highly additively structured and multiplicatively structured at the same time.  In order to support their conjecture, they proved that
there is a universal constant $c >0$ so that one has
$$\max\{ |A+A|, |A \cdot A|\} \geq |A|^{1+c}.$$

The constant $c$ has been made explicitly and improved over $35$ years. For instance, Elekes \cite{e1} proved that $c=1/4$, which has been improved to $4/3$ by Solymosi \cite{e2}, to $4/3+5/9813$ by Konyagin and Shkredov \cite{e3},  and to $4/3+1/1509$ by Rudnev, Shkredov, Stevens \cite{e4}. The current best known bound
is $4/3+5/5277$ given by Shakan \cite{sh1}.

In 2004, the finite field analogue of this problem has been first studied by Bourgain, Katz, and Tao \cite{bkt}. They showed that given any set $A \subset \mathbb F_{p}$ with $p$ prime and $p^{\delta} < |A| < p^{1-\delta}$ for some $\delta >0,$ one has
$$\max\{ |A+A|, |A \cdot A|\} \geq C_{\delta}|A|^{1+\epsilon},$$ for some $\epsilon=\epsilon(\delta) > 0$. Actually, this result not only proved a sum-product theorem in the setting of finite fields, but it also has been shown that there are many elegant applications in computer science and related fields. We refer readers to \cite{a3, a1, a2} for more details. 

There are many progresses on making explicitly the exponent $\epsilon$.  The current best bound with $\epsilon =1/5+4/305$ is due to Shakan and Shkredov \cite{sha} by employing a point-line incidence bound and the theory of higher energies. We refer readers to \cite{sha, heg1, heg2, heg3, heg4, pham, shen} and references therein for earlier results. 

In recent years, many variants of sum-product problems have been studied intensively. For example, 
by employing the current breakthrough point-plane incidence bound due to Rudnev \cite{R}, it has been shown in \cite{AMRS} that for any set $A\subset \mathbb{F}_p$, suppose that the size of $A$ is sufficiently small compared with the size of the field, then we have 
\begin{equation}\label{e1x}\max\{|A+A|, |A^2+A^2|\}\gg |A|^{8/7}, ~|(A-A)^2+(A-A)^2|\gg |A|^{9/8}, |A+A^2|\gg |A|^{11/10},\end{equation}
where $A^2:=\{x^2\colon x\in A\}$. These exponents have been improved in recent works. More precisely, Pham, Vinh, and De Zeeuw \cite{pham} showed that  $\max\{|A+A|, |A^2+A^2|\}\gg |A|^{6/5}, |A+A^2|\gg |A|^{6/5}$, and Petridis \cite{P} proved that  $|(A-A)^2+(A-A)^2|\gg |A|^{3/2}$. The higher dimensional version of this result can be found in \cite{pham}. 

We note that the lower bound of $(A-A)^2+(A-A)^2$ is not only interesting by itself in sum-product theory, but it also can be viewed as the finite field version of the celebrated Erd\H{o}s distinct distances problem for Cartesian product sets. We refer readers to \cite{A} for recent progresses on this problem for general sets.

In the setting of arbitrary finite fields $\mathbb{F}_q$ with $q$ is a prime power, the problems will become more technical due to the presence of subfields which eliminate the possibility of sum-product type estimates. It has been proved by Li and Roche-Newton \cite{lili} that for $A\subset \mathbb{F}_q\setminus \{0\}$, if $|A\cap cG|\le |G|^{1/2}$ for any subfield $G$ of $\mathbb{F}_q$ and any element $c\in \mathbb{F}_q^*$, then we have
\[\max\{|A+A|, |A\cdot A|\}\gg |A|^{1+\frac{1}{11}}.\]

The purpose of this paper is  to extend estimates in (\ref{e1x}) to the setting of arbitrary finite fields by employing methods in \cite{bkt, lili}. As mentioned before, the presence of subfields in general fields eliminates the sum-product type estimates. Therefore, it is natural to impose a condition which captures the behavior of how the given set $A$ intersects the subfields. Below are our main theorems.
\bigskip
\begin{theorem}\label{thm1}
Let $A\subset \mathbb{F}_q$. If $|A\cap( aG)|\le |G|^{1/2}$ for any subfield $G$ and $a \in \mathbb F_{q}^*$, then
\[|(A-A)^2+(A-A)^2|\gg |A|^{1+\frac{1}{21}}.\]
\end{theorem}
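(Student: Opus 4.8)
The plan is to turn the desired lower bound for $|(A-A)^2+(A-A)^2|$ into an upper bound for an associated ``distance energy'' via Cauchy--Schwarz, to recast that energy as a sum--product quantity for sets built from $A\pm A$, and then to bound it with a point--plane incidence argument in the quantitative, subfield-sensitive style of \cite{bkt, lili}. Write $T=(A-A)^2+(A-A)^2$ and, for $t\in T$, set $r(t)=\#\{(a,b,c,d)\in A^4:(a-b)^2+(c-d)^2=t\}$. Since $\sum_{t\in T}r(t)=|A|^4$, Cauchy--Schwarz gives $|T|\ge |A|^8/\mathcal E$, where
\[
\mathcal E=\sum_{t\in T}r(t)^2=\#\Bigl\{(a,b,c,d,a',b',c',d')\in A^8:\ (a-b)^2+(c-d)^2=(a'-b')^2+(c'-d')^2\Bigr\}.
\]
The trivial bound is $\mathcal E\ll |A|^7$, so the whole problem reduces to proving the gain $\mathcal E\ll |A|^{7-\frac1{21}}$.

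To bound $\mathcal E$, rewrite the defining equation as a difference of squares on each side,
\[
\bigl((a-b)-(a'-b')\bigr)\bigl((a-b)+(a'-b')\bigr)=\bigl((c'-d')-(c-d)\bigr)\bigl((c'-d')+(c-d)\bigr),
\]
and substitute $s=a-b,\ t=a'-b',\ s'=c'-d',\ t'=c-d$, all of which lie in $D:=A-A$ and are counted with the representation function $r_D(x)=\#\{(a,a')\in A^2:a-a'=x\}$. The equation becomes $s^2-t^2=s'^2-t'^2$. Writing $h(\lambda)$ for the weighted number of pairs $(s,t)\in D\times D$ with $s^2-t^2=\lambda$, one has $\mathcal E=\sum_\lambda h(\lambda)^2$, and for $\lambda\ne0$ the parametrisation $e=s-t$ gives $h(\lambda)=\sum_{e\ne0}r_D\!\bigl(\tfrac{e+\lambda/e}{2}\bigr)r_D\!\bigl(\tfrac{\lambda/e-e}{2}\bigr)$ (the contribution of $\lambda=0$ being easy to control). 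Here the parameter $e$ enters both additively and multiplicatively, so beating the trivial bound for $\mathcal E$ is exactly a sum--product statement for the additively structured set $D=A-A$; at this point I would apply a Rudnev-type point--plane incidence bound together with the explicit sum--product estimate of Li and Roche-Newton \cite{lili}. One really does need the $\mathbb{F}_q$ machinery of \cite{bkt, lili} here rather than the $\mathbb{F}_p$ results of \cite{pham, P}, because over $\mathbb{F}_q$ these estimates fail outright in the presence of a large subfield.

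Ruling out that failure is exactly what the subfield hypothesis is for. Note first that applying it with $G=\mathbb{F}_q$ and $a=1$ already forces $|A|\le q^{1/2}$, so $A$ automatically lies in the ``small set'' regime in which the incidence bounds are sharpest. The crucial point --- and the step I expect to be the main obstacle --- is to show that if $\mathcal E$ is anomalously large, then a positive proportion of $A-A$, of $A+A$, or of some translate of these must concentrate in a single coset $aG$ of a proper subfield $G$, and then to convert that structure back into a violation of $|A\cap aG|\le|G|^{1/2}$. Transferring the subfield condition from $A$ to the derived sets $A\pm A$ and their translates, with enough quantitative precision to reach the exponent $\tfrac1{21}$ rather than something weaker, is the technical heart of the matter, and is precisely where the argument of \cite{lili} must be reworked for the Cartesian-product distance quantity $|(A-A)^2+(A-A)^2|$ in place of $\max\{|A+A|,|A\cdot A|\}$.
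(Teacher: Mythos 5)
Your Cauchy--Schwarz reduction to $|T|\ge |A|^8/\mathcal E$ is sound and the rewriting $s^2-t^2=(s-t)(s+t)$ is a legitimate way to expose a sum--product phenomenon, but the proposal stops exactly at the step that carries all the difficulty. The phrase ``apply a Rudnev-type point--plane incidence bound together with the explicit sum--product estimate of Li and Roche-Newton'' is a placeholder, not an argument: Rudnev's incidence theorem over a field of characteristic $p$ requires the point/plane sets to be small compared to $p^2$, so over $\mathbb{F}_q$ with $q=p^k$ and $p$ small (say $p=2,3$) it gives nothing, whereas the theorem you are trying to prove makes no such restriction and holds for all prime powers $q$. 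Likewise, the Li--Roche-Newton bound controls $\max\{|A+A|,|A\cdot A|\}$, not the weighted energy $\mathcal E$, and you would still have to explain how to transfer its subfield hypothesis from $A$ to $D=A-A$ and its dilates; you flag this yourself as the ``main obstacle'' but offer no mechanism. So as written there is a genuine gap at the heart of the argument.

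This is also not the route the paper takes. The paper avoids incidence theory entirely and never forms a distance energy. Instead it follows the Bourgain--Katz--Tao scheme as adapted by Li--Roche-Newton: introduce the ratio set $R(A,A)=\{(a_1-a_2)/(a_3-a_4)\}$, and split into cases according to whether $1+R(A,A)$, $A\cdot R(A,A)$, $A^{-1}\cdot R(A,A)$ are contained in $R(A,A)$. If any containment fails one gets an $r\notin R(A,A)$ forcing $|A_3+rA_1|\gg|A|^2$, and Ruzsa covering (Lemma~\ref{cover}) plus Pl\"unnecke--Ruzsa (Lemmas~\ref{lmt1},~\ref{lmt2}) convert linear dilates such as $2a_1A_1$ into translates of $A^2$, so that the upper bound for $|A_1+rA_1|$ is expressed in powers of $\Delta=|(A-A)^2+(A-A)^2|$. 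If all three containments hold, the paper shows $F(A,\dots,A)+R(A,A)\subset R(A,A)$ for every integer polynomial $F$, hence by Lemma~\ref{lmt3} that $\mathbb{F}_A+R(A,A)\subset R(A,A)$; the subfield hypothesis is then applied directly to $A$ with $G=\mathbb{F}_A$ to get $|R(A,A)|\ge|\mathbb{F}_A|\ge|A|^2$, and an additive-energy pigeonhole over $r\in R(A,A)$ again yields an $r$ with $|A+rA|\gg|A|^2$. In particular, the subfield hypothesis is never ``transferred'' to $A\pm A$ (that only happens in Theorem~\ref{thm2}, whose hypothesis is phrased on $A+A$ for exactly this reason); it is used on $A$ itself via the generated subfield $\mathbb{F}_A$. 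Your dichotomy ``large $\mathcal E$ implies concentration in a coset of a subfield'' is not established and is not what the paper proves, so the proposal would need a substantially new idea to be completed.
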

It is worth noting that one can follow the method in \cite{han} and the sum-product result in \cite{lili} to obtain the exponent $|A|^{1+\frac{1}{26}}$. Therefore, in order to get a better exponent, we need to develop more sophisticated methods to prove our results. In our next theorem, we give a lower bound on $\max\{|A+A|, |A^2+A^2|\}$. 
\bigskip
\begin{theorem}\label{thm2}
If $A\subset \mathbb{F}_q$ and it satisfies that $|(A+A)\cap (aG+b)| \le |G|^{1/2}$ for any subfield $G,$  $a \in \mathbb F_{q}^*,$ and $ b\in \mathbb{F}_q,$ then we have
\[\max\{|A+A|, |A^2+A^2|\}\gg |A|^{1+\frac{1}{42}}.\]
\end{theorem}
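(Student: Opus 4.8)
The plan is to deduce Theorem~\ref{thm2} from Theorem~\ref{thm1}. Suppose, for a contradiction, that $\max\{|A+A|,|A^2+A^2|\}<c\,|A|^{1+1/42}$ for a small absolute constant $c$ to be fixed later; writing $K:=c\,|A|^{1/42}$ we then have $|A+A|\le K|A|$ and $|A^2+A^2|\le K|A|$. The first observation is that the hypothesis passes down to $A$ itself: since $a_0+A\subseteq A+A$ for any fixed $a_0\in A$, the assumption $|(A+A)\cap(aG+b)|\le|G|^{1/2}$ forces $|A\cap(aG+b)|\le|G|^{1/2}$ for every subfield $G$, every $a\in\mathbb{F}_q^{*}$ and every $b\in\mathbb{F}_q$; taking $b=0$ shows that $A$ satisfies the hypothesis of Theorem~\ref{thm1}, so
\[
|(A-A)^2+(A-A)^2|\gg|A|^{1+1/21}.
\]

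The heart of the proof is the complementary upper bound
\[
|(A-A)^2+(A-A)^2|\ll K^{2}|A|,
\]
since together with the previous display this forces $c$ to be bounded below by a positive absolute constant, contradicting the choice of $c$. To obtain it I would begin from the identity $(x-y)^2=2x^2+2y^2-(x+y)^2$, which gives $(A-A)^2\subseteq 2\!\cdot\!(A^2+A^2)-(A+A)^2$, where $2\!\cdot\!X=\{2x:x\in X\}$, and hence
\[
(A-A)^2+(A-A)^2\subseteq 2\!\cdot\!\bigl((A^2+A^2)+(A^2+A^2)\bigr)-\bigl((A+A)^2+(A+A)^2\bigr).
\]
Square collisions are at most two--to--one (and $x\mapsto x^{2}$ is a bijection in characteristic $2$), so $|A^{2}|\ge|A|/2$, $A^{2}$ has doubling at most $2K$, and by the Pl\"unnecke--Ruzsa inequality the first set on the right has size $\le(2K)^{O(1)}|A|$. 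Everything therefore reduces to controlling $(A+A)^2+(A+A)^2=\{s^2+t^2:s,t\in A+A\}$ and, more to the point, to showing that the two sets on the right both lie in a common structured set of size $K^{O(1)}|A|$, so that their difference does not inflate to size $|A|^{2}$. This is exactly where the non--concentration hypothesis on $A+A$ is essential, and where a point--plane incidence bound of Rudnev type over $\mathbb{F}_q$ (applicable precisely because no relevant configuration concentrates on a subfield plane), or else a Bourgain--Katz--Tao style iteration driven by $|A+A|\le K|A|$ and $|A^{2}+A^{2}|\le K|A|$, enters the argument; careful tracking there is what produces the factor $K^{2}$, hence the loss of exactly a factor $2$ in the exponent compared with Theorem~\ref{thm1}.

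I expect this last step to be the main obstacle. Plain sumset calculus (Ruzsa's triangle inequality, the Pl\"unnecke--Ruzsa inequality) does not suffice by itself: expanding $(a-b)^2+(c-d)^2$ produces cross terms such as $ab+cd$ whose image can be of size $|A|^{2}$ when $A$ is additively structured, so one cannot escape by algebra alone. An incidence/sum--product input is needed to tame these cross terms --- morally, to detect the approximate subfield structure that the two smallness hypotheses jointly impose on $A$ --- and this is presumably the point at which the ``more sophisticated methods'' alluded to after Theorem~\ref{thm1} are required.
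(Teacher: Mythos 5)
Your reduction does not go through: the key step, the upper bound $|(A-A)^2+(A-A)^2|\ll K^{2}|A|$, is not proved and is essentially as hard as the theorem itself. Your containment
\[
(A-A)^2+(A-A)^2\subseteq 2\cdot\bigl((A^2+A^2)+(A^2+A^2)\bigr)-\bigl((A+A)^2+(A+A)^2\bigr)
\]
replaces the set you want to bound by a \emph{difference} of two sets, and a set being contained in $U-V$ gives no upper bound on its size unless one controls both $|U|$ and $|V|$ \emph{and} knows that $U-V$ does not expand. Plünnecke--Ruzsa handles $U=2\cdot((A^2+A^2)+(A^2+A^2))$ since $A^{2}$ has doubling $\le 2K$, but the hypotheses give no control whatsoever on $V=(A+A)^2+(A+A)^2$: the smallness of $|A+A|$ and $|A^2+A^2|$ is consistent with $(A+A)^2+(A+A)^2$ having size $\gg|A+A|^{2}$, and even if $|U|,|V|\ll K^{O(1)}|A|$, the difference $U-V$ could still be of size $\gg|A|^{2}$ without additional structure linking $U$ and $V$. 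You flag this honestly as ``the main obstacle,'' but the vague appeal to a Rudnev-type incidence bound or a Bourgain--Katz--Tao iteration is precisely where a complete argument would have to live, and nothing in the sketch indicates how those tools would close the gap; moreover, the claim that such an argument would produce exactly $K^{2}$ (and hence the exponent $1/42$, exactly half of $1/21$) is unsupported optimism --- any Plünnecke/incidence route would accumulate a much larger power of $K$.

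There is also a structural mismatch between your plan and what the hypothesis actually says. You pass the non-concentration assumption from $A+A$ down to $A$ and then invoke Theorem~\ref{thm1}, discarding the stronger information about $A+A$. But the hypothesis is deliberately stated for $A+A$ rather than $A$: the paper's proof does \emph{not} route through Theorem~\ref{thm1} at all. It introduces $B=A+A$, defines the energy $E(A^2,(A-B)^2)$ counting solutions of $a_1^2+(a_2-b_1)^2=a_3^2+(a_4-b_2)^2$, and splits into two regimes. If the energy is small (Lemma~\ref{thm21}), a Cauchy--Schwarz count of the equation $x^2+(y-z)^2=t$ with $x,z\in A$, $y\in B$, $t\in A^2+A^2$ gives the theorem directly. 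If the energy is large (Lemma~\ref{lmx}), one extracts $X\subseteq A$ and $Y\subseteq B$ of sizes $\gg|A|^{1-\epsilon}$ and $\gg|B|^{1-\epsilon}$ whose shifted squares are efficiently covered by translates of $-A^2$, and then runs a ratio-set argument on $R(X,Y)$ in the style of Bourgain--Katz--Tao and Li--Roche-Newton. The reason the hypothesis refers to $A+A$ is that the ratio set $R(X,Y)$ is stable under $1+(\cdot)$ and $Y^{\pm1}\cdot(\cdot)$, so it absorbs $\mathbb{F}_Y$, the subfield generated by $Y\subseteq A+A$; the non-concentration of $A+A$ on cosets of subfields is what then forces $|R(X,Y)|\ge|Y|^{2}$. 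This is the mechanism your reduction bypasses, and it is not recoverable from Theorem~\ref{thm1} alone.
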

\bigskip
An application of the Pl\"unnecke inequality to Theorem 1.2, we have the following corollary.
\bigskip
\begin{corollary}
Let $A\subset \mathbb{F}_q.$  If $|(A+A)\cap (aG+b)| \le |G|^{1/2}$ for any subfield $G$ and $a \in \mathbb F_{q}^*$ and $ b\in \mathbb{F}_q$, then 
\[|A+A^2|\gg |A|^{1+\frac{1}{84}}.\]
\end{corollary}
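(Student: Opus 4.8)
The plan is to deduce the corollary directly from Theorem~\ref{thm2}, using the Pl\"unnecke inequality to bound \emph{both} $|A+A|$ and $|A^2+A^2|$ from above in terms of $M:=|A+A^2|$. First I would record two elementary facts: the squaring map $x\mapsto x^2$ on $\mathbb{F}_q$ is at most two-to-one (and a bijection when $q$ is even), so $|A^2|\gg|A|$; and, since a sumset of nonempty sets is at least as large as each of its summands, $M=|A^2+A|\ge|A^2|$ and $M\ge|A|$, so the ``doubling ratios'' $M/|A^2|$ and $M/|A|$ are both at least $1$.

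Next I would apply the Pl\"unnecke inequality (in its Pl\"unnecke--Ruzsa form: $|X+Y|\le K|X|\Rightarrow|2Y|\le K^2|X|$) with a different base set in each of the two bounds. With base set $A^2$ and summand $A$: from $|A^2+A|\le (M/|A^2|)\,|A^2|$ it follows that $|A+A|=|2A|\le (M/|A^2|)^2\,|A^2|=M^2/|A^2|\ll M^2/|A|$. With base set $A$ and summand $A^2$: from $|A+A^2|\le (M/|A|)\,|A|$ it follows that $|A^2+A^2|=|2A^2|\le (M/|A|)^2\,|A|=M^2/|A|$. Combining these,
\[\max\{|A+A|,\,|A^2+A^2|\}\ll \frac{M^2}{|A|}.\]
Since the hypothesis of the corollary coincides with that of Theorem~\ref{thm2}, the latter gives $\max\{|A+A|,\,|A^2+A^2|\}\gg |A|^{1+\frac{1}{42}}$, and comparing this with the display above yields $M^2/|A|\gg|A|^{1+\frac{1}{42}}$, i.e.\ $M\gg|A|^{1+\frac{1}{84}}$, which is the assertion of the corollary.

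The only step I expect to require a moment's thought is the choice of base set in the estimate for $|A+A|$. Since $A$ and $A^2$ play the role of independent ``directions'', trying to bound $|A+A|$ in terms of $|A+A^2|$ with $A$ itself as the base set is circular and leads nowhere, and Ruzsa's triangle inequality alone is likewise insufficient. The point is that $A+A^2$ is small \emph{relative to} $|A^2|$, so choosing $A^2$ as the Pl\"unnecke base set controls all iterated sumsets of $A$ simultaneously, in particular $A+A$. Everything else --- the bound $|A^2|\gg|A|$, the verification that the relevant doubling ratios are $\ge1$, and tracking the absolute constants absorbed into $\ll$ and $\gg$ --- is routine bookkeeping.
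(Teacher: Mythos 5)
Your proof is correct and follows exactly the approach the paper indicates (the paper itself only remarks ``an application of the Pl\"unnecke inequality to Theorem~\ref{thm2}'' without giving details). Both of your Pl\"unnecke--Ruzsa applications are sound --- with base $A^2$ to bound $|A+A|\le M^2/|A^2|\ll M^2/|A|$, and with base $A$ to bound $|A^2+A^2|\le M^2/|A|$ --- and together with $\max\{|A+A|,|A^2+A^2|\}\gg|A|^{1+1/42}$ from Theorem~\ref{thm2} (whose hypothesis coincides with the corollary's) they give $M\gg|A|^{1+1/84}$ as required.
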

\bigskip
The rest of the papers are devoted to the proofs of Theorems 1.1 and 1.2.  Throughout this paper, we use the notation $f \gg g$ to mean there is an absolute constant $C$ such that $f \geq Cg$. The constant $C$ may vary from line to line, but is always an absolute constant.
\section{Proof of Theorem \ref{thm1}}
To prove Theorem \ref{thm1}, we make use of the following lemmas.
\begin{lemma}[\cite{vt}]\label{lmt1}
Let $X, B_1, \ldots, B_k$ be subsets of $\mathbb{F}_q$. Then we have
\[|B_1+\cdots +B_k|\le \frac{|X+B_1|\cdots |X+B_k|}{|X|^{k-1}}\]
and
\[|B_1-B_2|\le \frac{|X+B_1||X+B_2|}{|X|}.\]
\end{lemma}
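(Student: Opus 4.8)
The plan is to establish the two displayed inequalities by different arguments: the difference estimate admits a direct injective proof, whereas the sum estimate is the $k$-fold Pl\"unnecke--Ruzsa inequality and needs the corresponding machinery.

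First I would dispose of the second inequality. We may assume $X\neq\emptyset$, as otherwise there is nothing to prove. For each $d\in B_1-B_2$ fix, once and for all, a representation $d=\beta_1(d)-\beta_2(d)$ with $\beta_1(d)\in B_1$ and $\beta_2(d)\in B_2$, and consider the map
\[
\phi\colon X\times(B_1-B_2)\longrightarrow (X+B_1)\times(X+B_2),\qquad
\phi(x,d)=\bigl(x+\beta_1(d),\ x+\beta_2(d)\bigr),
\]
which is well defined. It is injective: from a value $(u,v)=\phi(x,d)$ one recovers $d=u-v=\beta_1(d)-\beta_2(d)$, hence the pair $\bigl(\beta_1(d),\beta_2(d)\bigr)$ is determined, and then $x=u-\beta_1(d)$. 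Comparing the sizes of the domain and the codomain gives $|X|\,|B_1-B_2|\le|X+B_1|\,|X+B_2|$, which is exactly the claim.

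For the first inequality I would argue by induction on $k$. The case $k=1$ is $|B_1|\le|X+B_1|$, obtained by translating $B_1$ by a fixed element of $X$. The essential case is $k=2$, namely $|B_1+B_2|\le|X+B_1|\,|X+B_2|/|X|$; this is the Pl\"unnecke--Ruzsa inequality, which I would prove by Petridis's method. Choose a nonempty $X'\subseteq X$ minimizing the ratio $|X'+B_1|/|X'|$ over all nonempty subsets of $X$, and write $K$ for this minimum, so that $K\le|X+B_1|/|X|$. One proves, by induction on $|C|$, the estimate $|X'+B_1+C|\le K\,|X'+C|$ for every finite set $C$; specializing it to $C=B_2$ and combining with $|B_1+B_2|\le|X'+B_1+B_2|$ (translate by an element of $X'$), with $|X'+B_2|\le|X+B_2|$, and with $K\le|X+B_1|/|X|$ then yields the $k=2$ bound. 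The general case $k\ge3$ follows by iterating this two-set estimate, with the pivot set chosen appropriately at each stage; this is the standard derivation of the $k$-fold Pl\"unnecke--Ruzsa inequality, which I would reproduce or simply invoke.

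The step I expect to be the main obstacle is the inner induction in Petridis's estimate, that is, the passage from $|C|-1$ to $|C|$. Writing $C=C'\cup\{c\}$, the increment $|X'+B_1+C|-|X'+B_1+C'|$ counts the elements of $X'+B_1+c$ that do not already lie in $X'+B_1+C'$; setting $E=\{a\in X':a+c\notin X'+C'\}$, one checks that all such elements lie in $E+B_1+c$, whereas $(X'\setminus E)+B_1+c$ is entirely contained in $X'+B_1+C'$, so the increment is at most $|X'+B_1|-|(X'\setminus E)+B_1|$. By the minimality of $K$ one has $|(X'\setminus E)+B_1|\ge K\,|X'\setminus E|$, hence the increment is at most $K\,|X'|-K\,|X'\setminus E|=K\,|E|=K\bigl(|X'+C|-|X'+C'|\bigr)$, and the induction closes. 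Getting this covering argument, and for $k\ge3$ the bookkeeping of the pivot sets, exactly right is the only genuinely delicate point; everything else is routine, and one may in any case simply quote the Pl\"unnecke--Ruzsa inequality, from which both displays follow at once.
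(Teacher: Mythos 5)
The paper offers no proof of this lemma; it is cited to Tao--Vu, so there is nothing to compare against line by line. Your argument is the standard one and is essentially correct: the injective-map proof of $|X||B_1-B_2|\le|X+B_1||X+B_2|$ is exactly Ruzsa's triangle inequality and is watertight, and the Petridis argument (minimizing $|X'+B_1|/|X'|$, then proving $|X'+B_1+C|\le K|X'+C|$ by induction on $|C|$) correctly yields the $k=2$ case.

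One cautionary remark about your treatment of $k\ge 3$. Saying the general case ``follows by iterating the two-set estimate'' is slightly misleading, and you should be aware of why the naive iteration does not close. If you group $B_1+(B_2+\cdots+B_k)$ and apply the $k=2$ bound, you are left to control $|X+B_2+\cdots+B_k|$, which is not what the inductive hypothesis bounds. If instead you nest the pivot sets, choosing $X_1\subseteq X$ minimizing $|X_1+B_1|/|X_1|$, then $X_2\subseteq X_1$ minimizing $|X_2+B_2|/|X_2|$, etc., the ratios you extract are $|X_{i-1}+B_i|/|X_{i-1}|$, which need not be bounded by $|X+B_i|/|X|$ because $|X_{i-1}|$ may be much smaller than $|X|$. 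The clean way out is to run Petridis's key inequality once with $C=B_2+\cdots+B_k$ and to prove by induction on $k$ the stronger statement that there is a nonempty $Z\subseteq X$ with $|Z+B_1+\cdots+B_k|\le\bigl(\prod_i|X+B_i|/|X|\bigr)|Z|$, which then specializes to the lemma since $|Z|\le|X|$ and $|B_1+\cdots+B_k|\le|Z+B_1+\cdots+B_k|$. You do flag ``the bookkeeping of the pivot sets'' as the delicate point and offer to quote the Pl\"unnecke--Ruzsa inequality outright, which is acceptable here, since that is precisely what the paper does; but the write-up as given would need that inner induction spelled out to be self-contained.
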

\bigskip
\begin{lemma}[\cite{shen}]\label{lmt2}
Let $X, B_1, \ldots, B_k$ be subsets in $\mathbb{F}_q$. Then, for any $0<\epsilon<1$, there exists a subset $X'\subset X$ such that $|X'|\ge (1-\epsilon)|X|$ and
\[|X'+B_1+\cdots+B_k|\le c\cdot \frac{|X+B_1|\cdots|X+B_k|}{|X|^{k-1}},\]
for some positive constant $c=c(\epsilon).$
\end{lemma}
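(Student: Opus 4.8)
The plan is to prove Lemma~\ref{lmt2} by a greedy \emph{peeling} argument: one repeatedly locates inside $X$ a subset on which the iterated sumset $B_1+\cdots+B_k$ can be added cheaply, removes it, and iterates; after finitely many rounds only an $\epsilon$-fraction of $X$ is left, and the union of the extracted pieces is the desired $X'$. Note that the naïve choice $X'=X$ together with Lemma~\ref{lmt1} only delivers $|X+B_1+\cdots+B_k|\le\frac{|X+X|}{|X|}\cdot\frac{|X+B_1|\cdots|X+B_k|}{|X|^{k-1}}$ (apply Lemma~\ref{lmt1} with pivot $X$ to the $k+1$ sets $X,B_1,\dots,B_k$), which carries an unwanted doubling factor $|X+X|/|X|$; passing to a subset is what lets us dispense with controlling the doubling of $X$.

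The single external ingredient I would use is the Plünnecke--Ruzsa inequality in its \emph{restricted} form, a refinement of Lemma~\ref{lmt1} that is standard in this circle of ideas (it follows, for instance, from Plünnecke's graph inequality or from Petridis's combinatorial method): for every finite nonempty $Y\subset\mathbb{F}_q$ and all finite $B_1,\dots,B_k\subset\mathbb{F}_q$ there exists a nonempty $W\subset Y$ with
\[
|W+B_1+\cdots+B_k|\ \le\ \Big(\prod_{i=1}^{k}\frac{|Y+B_i|}{|Y|}\Big)|W|.
\]
Lemma~\ref{lmt1} is essentially the case $W=\{y\}$ of this, but with the cruder normalisation $|Y|$ in place of $|W|$ on the right; the whole point here is the factor $|W|$, i.e.\ that the Plünnecke critical subset carries an estimate scaling with its own cardinality.

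Granting this, set $K_i:=|X+B_i|/|X|$, so that the target quantity is $M:=\frac{|X+B_1|\cdots|X+B_k|}{|X|^{k-1}}=\big(\prod_{i}K_i\big)|X|$. I would construct pairwise disjoint subsets $W_0,W_1,\dots$ of $X$ as follows. Put $R_0:=X$; as long as $|R_j|\ge\epsilon|X|$, apply the restricted inequality to $Y=R_j$ to obtain a nonempty $W_j\subset R_j$ with
\[
|W_j+B_1+\cdots+B_k|\ \le\ \Big(\prod_{i=1}^{k}\frac{|R_j+B_i|}{|R_j|}\Big)|W_j|\ \le\ \Big(\prod_{i=1}^{k}\frac{|X+B_i|}{\epsilon|X|}\Big)|W_j|\ =\ \epsilon^{-k}\Big(\prod_{i=1}^{k}K_i\Big)|W_j|,
\]
where I used $R_j\subset X$ in the numerators and $|R_j|\ge\epsilon|X|$ in the denominators; then set $R_{j+1}:=R_j\setminus W_j$. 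Each $W_j$ is nonempty, so after finitely many steps one reaches an index $m$ with $|R_m|<\epsilon|X|$. Taking $X':=X\setminus R_m=\bigcup_{j<m}W_j$ we have $|X'|>(1-\epsilon)|X|$, and, since $U\mapsto|U+B_1+\cdots+B_k|$ is subadditive under unions and the $W_j$ are disjoint subsets of $X$,
\[
|X'+B_1+\cdots+B_k|\ \le\ \sum_{j<m}|W_j+B_1+\cdots+B_k|\ \le\ \epsilon^{-k}\Big(\prod_{i=1}^{k}K_i\Big)\sum_{j<m}|W_j|\ \le\ \epsilon^{-k}\Big(\prod_{i=1}^{k}K_i\Big)|X|\ =\ \epsilon^{-k}M,
\]
which is the claim with $c(\epsilon)=\epsilon^{-k}$.

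The step I expect to be the real obstacle is precisely the restricted Plünnecke--Ruzsa estimate with the bound \emph{proportional to $|W_j|$}. With only Lemma~\ref{lmt1} available, each peeled piece would merely satisfy $|W_j+B_1+\cdots+B_k|\le\epsilon^{-(k-1)}M$ regardless of how small $W_j$ is, and since there can be on the order of $|X|$ pieces this would lose an entire factor of $|X|$ and give nothing. Thus the argument hinges on the size-weighted form of the inequality; once that is in hand, the disjointness of the $W_j$ lets the contributions telescope into a single factor $\prod_i K_i$ times $\sum_{j<m}|W_j|\le|X|$, with all of the $\epsilon$-dependence collected into the harmless constant $\epsilon^{-k}$ (the implicit dependence on $k$ being absorbed as usual).
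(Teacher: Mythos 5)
Your proposal is correct and is essentially the same argument as in the cited reference \cite{shen}: there the restricted Pl\"unnecke--Ruzsa inequality for several summands (a nonempty $W\subset Y$ with $|W+B_1+\cdots+B_k|\le\bigl(\prod_i|Y+B_i|/|Y|\bigr)|W|$) is combined with exactly this greedy peeling of $W_j$'s from $X$ until the remainder has size below $\epsilon|X|$, and the disjointness of the pieces telescopes into the bound with $c(\epsilon)=\epsilon^{-k}$. The only cosmetic difference is that \cite{shen} states the result for $\epsilon=1/2$, while you run the same argument for general $\epsilon$, which is harmless.
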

\bigskip
\begin{lemma}[\cite{lili}]\label{lmt3}
Let $B$ be a subset of $\mathbb{F}_q$ with at least two elements, and define $\mathbb{F}_B$ as the subfield generated by $B$. Then there exists a polynomial $P(x_1, \ldots, x_n)$ in $n$ variables with integer coefficients  such that
\[P(B, \ldots, B)=\mathbb{F}_B.\]
\end{lemma}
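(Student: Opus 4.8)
The plan is to identify $\mathbb{F}_B$ with a ring and then \emph{compress} the unboundedly many polynomial expressions producing its elements into a single polynomial. Let $R$ be the subring of $\mathbb{F}_q$ generated by $B$ (so $R$ contains $1$ and is closed under addition, subtraction, and multiplication). First I would observe that $R$ is a finite integral domain — it is a subring of the field $\mathbb{F}_q$ — hence a field; since $R$ contains $B$ and is contained in every subfield that contains $B$, we get $R=\mathbb{F}_B$. Note that for \emph{any} $P\in\mathbb{Z}[x_1,\dots,x_n]$ we automatically have $P(B,\dots,B)\subseteq\mathbb{F}_B$, since evaluating an integer polynomial is an iterated ring operation inside $R=\mathbb{F}_B$. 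So the whole problem reduces to constructing one $P$ whose image \emph{contains} the finite set $\mathbb{F}_B$.

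Second, I would fix a finite spanning set of monomials. The field $\mathbb{F}_B$ has characteristic $p$ and is finite-dimensional over its prime subfield $\mathbb{F}_p$, and $R$ is spanned over $\mathbb{F}_p$ by products of elements of $B$; hence finitely many such monomials $M_1,\dots,M_r$, say $M_j=b_{j,1}\cdots b_{j,e_j}$ with all $b_{j,i}\in B$, already span $\mathbb{F}_B$ over $\mathbb{F}_p$. Thus every $a\in\mathbb{F}_B$ can be written $a=\sum_{j=1}^r c_jM_j$ with each $c_j\in\{0,1,\dots,p-1\}$, reading the integer $c_j$ as $c_j\cdot 1$ in $\mathbb{F}_B$.

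Third — the crux — I would build a coefficient gadget realizing $\mathbb{F}_p$ as a polynomial image of $B$, and this is exactly where the hypothesis $|B|\ge 2$ is used. Put $N:=|\mathbb{F}_B|-1\ge 1$ and let $\psi(u,v):=(u-v)^N$, which has integer coefficients. By Lagrange's theorem in the multiplicative group $\mathbb{F}_B^{*}$ we have $\psi(u,v)=1$ whenever $u\neq v$ and $\psi(u,v)=0$ whenever $u=v$; since $B$ has two distinct elements, as $(u,v)$ ranges over $B^2$ the value $\psi(u,v)$ ranges over all of $\{0,1\}\subseteq\mathbb{F}_B$. Summing $p-1$ independent copies, $h:=\sum_{i=1}^{p-1}(u_i-v_i)^N$ then ranges over all of $\{0,1,\dots,p-1\}\cdot 1=\mathbb{F}_p$ as its variables run over $B$. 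Finally I would assemble
\[
P \;:=\; \sum_{j=1}^{r} h_{j}\bigl(\vec u^{(j)},\vec v^{(j)}\bigr)\,y^{(j)}_{1}\cdots y^{(j)}_{e_j},
\]
a polynomial with integer coefficients in pairwise disjoint blocks of variables, with a fresh copy $h_j$ of $h$ for each $j$. Given a target $a=\sum_j c_jM_j\in\mathbb{F}_B$, specialising $y^{(j)}_i=b_{j,i}$ and choosing the $h_j$-variables in $B$ so that $h_j$ evaluates to $c_j$ makes $P$ evaluate to $a$ at a point of $B^n$; hence $\mathbb{F}_B\subseteq P(B,\dots,B)$, and with the automatic reverse inclusion we are done.

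The one genuine obstacle I anticipate is the third step: one must manufacture \emph{all} residues of the prime field out of a polynomial evaluated on a set that may contain only two elements, and the identity $(u-v)^{|\mathbb{F}_B|-1}\in\{0,1\}$ is precisely what makes this possible. Everything else — identifying $\mathbb{F}_B$ with the subring generated by $B$, extracting finitely many spanning monomials, and keeping the variable blocks disjoint so the coefficients vary independently — is bookkeeping; in particular the degenerate cases (such as $0\in B$, or $B$ lying in a small subfield) require no separate argument, since $N\ge 1$ and the formulas above apply verbatim.
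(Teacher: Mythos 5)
Your proof is correct. The paper itself does not prove this lemma; it imports it from \cite{lili} (and the statement traces back to Bourgain--Katz--Tao), so there is no in-paper argument to compare against line by line. Your reconstruction is sound and captures exactly the standard mechanism: identify $\mathbb{F}_B$ with the subring generated by $B$ via the ``finite integral domain is a field'' fact, span $\mathbb{F}_B$ over $\mathbb{F}_p$ by finitely many $B$-monomials, and then use the gadget $(u-v)^{|\mathbb{F}_B|-1}\in\{0,1\}$ (summed $p-1$ times over disjoint variable blocks) to realize every prime-field coefficient from inputs in $B$. This compression step is the genuine content of the lemma and the only place the hypothesis $|B|\ge 2$ is used, and you have identified and executed it correctly, including the small points one might overlook: $N\ge 1$ so that $0^N=0$; the spanning monomials may be taken nonempty since $1=b^{\mathrm{ord}(b)}$ for a nonzero $b\in B$; and the disjointness of the variable blocks is what lets the coefficients $c_j$ vary independently. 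The reverse inclusion $P(B,\dots,B)\subseteq\mathbb{F}_B$ is, as you note, automatic for any integer-coefficient polynomial.
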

\bigskip
\begin{lemma}[\cite{lili}]\label{cover}
Let $X$ and $Y$ be additive sets. Then for any $\epsilon \in (0,1)$ there is some constant $C=C(\epsilon)$ such that at least $(1-\epsilon)|X|$ of the elements of $X$ can be covered by
\[C\cdot \min \left\lbrace \frac{|X+Y|}{|Y|}, \frac{|X-Y|}{|Y|}\right\rbrace.\]
translates of $Y$.
\end{lemma}
\bigskip
We are now ready to prove Theorem 1.1.
\begin{proof}[Proof of Theorem \ref{thm1}]
We first define $\Delta:=|(A-A)^2+(A-A)^2|$.  Without loss of generality, we may assume $1, 0 \in A$ by scaling or translating. We now define the ratio set:
\[R(A, A):=\left\lbrace  \frac{a_1-a_2}{a_3-a_4}\colon a_i \in A, a_3 \ne a_4 \right\rbrace.\]
We now consider the following cases:

{\bf Case 1:} $1+R(A, A)\not\subset R(A, A)$.

In this case, there exist $a_1, a_2, b_1, b_2\in A$ such that
\[r:=1+\frac{a_1-a_2}{b_1- b_2}\not\in R(A, A).\]

One can apply Lemma \ref{cover} four times to obtain a subset $A_1\subset A$ with $|A_1|\gg |A|$ such that $2a_1A_1$ can be covered by at most
\[\frac{|2a_1A_1+A^2-a_1^2|}{|A|}\le \frac{|(A-a_1)^2-A^2-A^2|}{|A|}\le \frac{|(A-A)^2-A^2-A^2|}{|A|}\]
translates of $A^2$,  $2b_1A_1$ can be covered by at most
\[\frac{|2b_1A_1+A_2^2-b_1^2|}{|A_2|}\le \frac{|(A-A)^2-A^2-A^2|}{|A|}\]
translates of $A_2^2$, where $A_2$ is a subset of $A$ with $|A_2|\gg |A|$ and
\[|A_2^2+A^2+A^2+A^2|\ll \frac{|A^2+A^2|^3}{|A|^2},\]
which can be obtained by using Lemma \ref{lmt2}, and for any $x\in \{-b_2, -a_2\}$, the set $-2xA_1$ can be covered by at most
\[\frac{|-2xA_1-A^2+x^2|}{|A|}\le \frac{|(A-x)^2-A^2-A^2|}{|A|}\le \frac{|(A-A)^2-A^2-A^2|}{|A|}\]
translates of $A^2$. Applying Lemma \ref{lmt2} again, we have that there exists a subset $A_3\subset A_1$ such that $|A_3|\gg |A_1|$ and
\begin{equation}\label{eq-o-1-1}
|(b_1-b_2)A_3+(b_1-b_2)A_1+(a_1-a_2)A_1|\ll \frac{|A+A||(b_1-b_2)A_1+(a_1-a_2)A_1|}{|A_1|}.\end{equation}
On the other hand, we also have
\begin{equation}\label{eq-o-2-1}|(b_1-b_2)A_3+(b_1-b_2)A_1+(a_1-a_2)A_1| \geq |A_3+rA_1|, \end{equation}
because $r\not\in R(A, A)$ implies that the equation
\[a_1-a_2=r(b_1+rb_2)\]
has no non-trivial solutions.  This gives us
\begin{equation}\label{eq-o-3-1}|A_3+rA_1|=|A_3||A_1|\gg |A|^2. \end{equation}
We now estimate $(b_1-b_2)A_1+(a_1-a_2)A_1$ as follows.

First we note that
\[|(b_1-b_2)A_1+(a_1-a_2)A_1|=|2b_1A_1-2b_2A_1+2a_1A_1-2a_2A_1|.\]

Since $2b_1A_1$ can be covered by at most $|(A-A)^2+A^2-A^2|/|A|$ copies of $A_2^2$, $-2b_2A_1$ can be covered by at most $|(A-A)^2+A^2-A^2|/|A|$ copies of $A^2$, $2a_1A_1$ can be covered by at most $|(A-A)^2+A^2-A^2|/|A|$ copies of $A^2$, and $-2a_2A_1$ can be covered by at most $|(A-A)^2+A^2-A^2|/|A|$ copies of $A^2$, we have
\begin{align}\label{eq-o-4-1}
|(b_1-b_2)A_1+(a_1-a_2)A_1|&\ll \frac{|(A-A)^2-A^2-A^2|^4}{|A|^4}\cdot |A_2^2+A^2+A^2+A^2|\nonumber\\
&\le  \frac{|A^2+A^2|^3}{|A|^6} |(A-A)^2-A^2-A^2|^4.
\end{align}

Lemma \ref{lmt2} tells us that there exists a set $X\subset A^2$ such that $|X|\gg |A|$ and 
\[|X+A^2+A^2|\ll \frac{|A^2+A^2|^2}{|A|}.\]

So, applying Lemma \ref{lmt1}, we have
\begin{align*}
|(A-A)^2-(A^2+A^2)|&\ll \frac{|(A-A)^2+X||X+A^2+A^2|}{|X|} \ll \frac{\Delta^3}{|A|^2}.
\end{align*}

Putting (\ref{eq-o-1-1})-(\ref{eq-o-4-1}) together, and using the fact that $|A^2+A^2|\le \Delta$ and $|A+A| \leq \frac{|A-A|^2}{|A|}$, we have
\[\Delta\gg |A|^{1+\frac{1}{17}}.\]
{\bf Case 2:} $A\cdot R(A, A)\not\subset R(A, A)$.
As above, there are elements $a_1, a_2, b, b_1, b_2\in A$ such that
\[r:=b\cdot \frac{a_1-a_2}{b_1-b_2}\not\in R(A, A).\]
Note that $b\ne 0$ and $a_1\ne a_2$ since $0\in R(A, A)$.  Thus $r^{-1}$ exists.

Let $A_1$ be the set as in Case $1$. Lemma \ref{cover} implies that there exists a set $A_2\subset A_1$ such that $|A_2|\gg |A_1|$ and $-2bA_2$ can be covered by at most
\[\frac{|-2bA_2+A^2+b^2|}{|A|}\le \frac{|(A-b)^2+A^2-A^2|}{|A|}\]
translates of $A^2$.

Using the same argument as above, we have
\begin{align*}
|A|^2\ll |A_2+rA_2|&=|r^{-1}A_2+A_2|\ll \frac{|b^{-1}A_2+A_2||(a_1-a_2)A_2+(b_1-b_2)A_2|}{|A|}\\
&\le \frac{|b^{-1}A_2+A_2||(a_1-a_2)A_1+(b_1-b_2)A_1|}{|A|}\\
&\le \frac{|A_2+bA_2|\Delta^{15}}{|A|^{15}}.
\end{align*}

Since $-2bA_2$ can be covered by at most $|(A-b)^2-A^2-A^2|/|A|$ translates of $-A^2$, we have

\[|-2A_2-2bA_2|\le \frac{|(A-b)^2-A^2-A^2|}{|A|}|-2A_2-A^2|\ll \frac{\Delta^3}{|A|^3}|-2A-A^2|.\]

Moreover, we also have
\[|-A^2-2A|=|-A^2-2A+1|\le |(A-1)^2-A^2-A^2|\le \frac{\Delta^3}{|A|^2}.\]
Therefore
\[|A_2+bA_2|=|-2A_2-2bA_2|\ll \frac{\Delta^6}{|A|^5}.\]
In other words, we obtain
\[\Delta\gg |A|^{1+\frac{1}{21}}.\]
{\bf Case 3:} $A^{-1}\cdot R(A, A)\not\subset R(A, A)$.

As above, in this case, there exist $a_1, a_2, b_1, b_2, b\in A, b\ne 0$ such that
\[r:=b^{-1}\cdot \frac{a_1-a_2}{b_1-b_2}\not\in R(A, A).\]
As in Case $2,$ we see that  $r^{-1}$ exists, and one can use the same argument to show that
\[\Delta\gg |A|^{1+\frac{1}{21}}.\]
{\bf Case 4:} We now consider the last case
\begin{align}
1+R(A, A)&\subset R(A, A)\\
A\cdot R(A, A)&\subset R(A, A)\\
A^{-1}\cdot R(A, A)&\subset R(A, A).
\end{align}
In the next step, we prove that for any polynomial $F(x_1, \ldots, x_n)$ in $n$ variables with integer coefficients, we have
\[F(A, \ldots, A)+R(A, A)\subset R(A, A).\]
Indeed, it is sufficient to prove that
\[1+R(A, A)\subset R(A, A), ~A^m+R(A, A)\subset R(A, A)\]
for any integer $m\ge 1$, and $A^m=A\cdots A$ ($m$ times).

It is clear that the first requirement $1+R(A, A)\subset R(A, A)$ is satisfied.   For the second requirement, it is sufficient to prove it for $m=2$, since one can use inductive arguments for larger $m$.

Let $a, a'$ be arbitrary elements in $A$. We now show that
\[aa'+R(A, A)\subset R(A, A).\]
If either $a=0$ or $a'=0$, then we are done. Thus we may assume that $a\ne 0$ and $a'\ne 0$.

First we have
\[a+R(A, A)=a(1+a^{-1}R(A, A))\subset a(1+R(A, A)))\subset R(A, A),\]
and
\[aa'+R(A, A)=a(a'+a^{-1}R(A, A))\subset a(a'+R(A, A))\subset aR(A, A)\subset R(A, A).\]

In other words, we have proved that for any polynomial $F(x_1, x_2, \ldots, x_n)$ with integer coefficients, we have
\[F(A, \ldots, A)+R(A, A)\subset R(A, A).\]
On the other hand, Lemma \ref{lmt3} gives us that  there exists a polynomial $P$ such that \[P(A, \ldots, A)=\mathbb{F}_A.\] This follows that
\[\mathbb{F}_A+R(A, A)\subset R(A, A).\]

It follows from the assumption of the theorem that
\[|A|=|A\cap \mathbb{F}_A|\le |\mathbb{F}_A|^{1/2}.\]
Hence, $|R(A, A)|\ge |\mathbb{F}_A|\ge |A|^2$.

Next we will show that there exists $r\in R(A, A)$ such that
\[|A+rA|\gg |A|^2.\]

Indeed, let $E^+(X, Y)$ be the number of tuples $(x_1, x_2, y_1, y_2)\in X^2\times Y^2 $ such that
\[x_1+y_1=x_2+y_2.\]
Notice that the sum $\sum_{r\in R(A, A)}E^+(A, rA)$ is the number of tuples $(a_1, a_2, b_1, b_2)\in A^2\times A^2$ such that
\[a_1+rb_1=a_2+rb_2\]
with $a_1, a_2\in A$, $b_1, b_2\in A$ and $r\in R(A, A)$. It is clear that there are at most $|R(A, A)||A|^2$ tuples with $a_1=a_2, b_1=b_2$, and at most $|A|^4$ tuples with $b_1\ne b_2$. Therefore, we get
\[\sum_{r\in R(A, A)}E^+(A, rA)\le |R(A, A)||A|^2+|A|^4\le 2|R(A, A)||A|^2.\]
By the pigeon-hole principle, there exists $r:=\frac{a_1-a_2}{b_1-b_2}\in R(A, A)$ such that
\[E^+(A, rA)\le 2|A|^2.\]
Hence,
\[|A+rA|\ge |A|^2/2.\]
Suppose $r=(a_1-a_2)/(b_1-b_2)$. Let $A_1$ be the set defined as in Case $1$. Note that we can always assume that $|A_1|\ge 9|A|/10$. Hence
\[|A\setminus A_1+rA_1|, |A+r(A\setminus A_1)|\le |A|^2/10.\]
Thus we get
\[|A_1+rA_1|\gg |A|^2.\]
Using the upper bound of $|A_1+rA_1|$ in  Case $1$, we have
\[|A|^2\ll |A_1+rA_1|= |(b_1-b_2)A_1+(a_1-a_2)A_1|\le \frac{\Delta^{15}}{|A|^{14}},\]
which gives us
\[\Delta\gg |A|^{1+\frac{1}{15}}.\]
This completes the proof of the theorem.
\end{proof}
\bigskip
\section{Proof of Theorem \ref{thm2}}
For $A\subset \mathbb{F}_q$ and $B:=A+A$, we define $E(A^2, (A-B)^2)$ as the number of $6$-tuples $(a_1, a_2, b_1, a_3, a_4, b_2)\in (A\times A\times B)^2$ such that
\[a_1^2+(a_2-b_1)^2=a_3^2+(a_4-b_2)^2.\]
\begin{lemma}\label{thm21}
Let $A\subset \mathbb{F}_q$, and $B:=A+A$. If
\[E\left(A^2,(A-B)^2\right)\le |A|^{3-\epsilon}|B|^2,\]
then we have
\[\max\left\lbrace |A+A|, |A^2+A^2|\right\rbrace \gg |A|^{1+\frac{\epsilon}{3}}.\]
\end{lemma}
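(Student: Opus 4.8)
The plan is to run a Cauchy--Schwarz/incidence argument that converts the energy hypothesis into a lower bound on the number of congruent ``circles'' through pairs of points, and then to bound that quantity from above using a point--line incidence estimate (Rudnev's point--plane bound, via its usual specialization, exactly as in \cite{AMRS, pham}). First I would set $B=A+A$ and consider the point set $P = A^2 \times (A-B)^2 \subset \mathbb{F}_q^2$, so that $|P| \le |A|\,|B|$ (each coordinate is a squared set), and for each $a_3 \in A$, $b_2 \in B$ the locus $\{(x,y) : x + y = a_3^2 + (a_4-b_2)^2 \text{ for some } a_4 \in A\}$ packages the relevant algebraic data. The quantity $E(A^2,(A-B)^2)$ counts exactly the additive collisions $x_1+y_1 = x_2+y_2$ with $x_i \in A^2$, $y_i \in (A-B)^2$; writing $r(t)$ for the number of representations $t = x+y$ with $x\in A^2$, $y\in(A-B)^2$, we have $E = \sum_t r(t)^2$, and by Cauchy--Schwarz $E \ge (\sum_t r(t))^2 / |A^2+(A-B)^2| = |A|^2|B|^2/|A^2+(A-B)^2|$ — but this trivial direction is the wrong one, so instead I would use the hypothesis $E \le |A|^{3-\epsilon}|B|^2$ as the \emph{input} to a second-moment pigeonholing: it forces the representation function $r$ to be spread out, and hence forces a popular difference/line to support many incidences.

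The heart of the argument is the geometric reinterpretation. Expanding $a_1^2 + (a_2-b_1)^2 = a_3^2 + (a_4-b_2)^2$ and collecting terms, each solution corresponds to a collinear triple of points in a Rudnev-type configuration: the points $(a_1, a_1^2)$, $(a_2 - b_1, (a_2-b_1)^2)$ lie on the parabola, and the equation becomes an incidence between $|A||B|$ points and a family of $|A||B|$ lines (the ``radical axes'' of equal-radius circles), where the line determined by $(a_3,b_2)$ and $(a_4,\cdot)$ absorbs the cross terms $-2a_4 b_2$, $-2a_1\cdot 0$ etc. Concretely I would write the quadratic identity as a bilinear equation in $(a_1, a_3)$ and $(a_2-b_1, a_4-b_2)$ of the shape handled by the point--plane incidence bound, so that after the standard reduction the number of solutions is
\[
E\big(A^2,(A-B)^2\big) \ll \frac{(|A||B|)^{?}}{q} + |A|^{3/2}|B|^{3/2}\cdot(\text{correction}),
\]
with the $1/q$ term negligible because $A$ is assumed small relative to $q$; comparing this upper bound with the hypothesis $E \le |A|^{3-\epsilon}|B|^2$ is what I would \emph{not} do — rather, I would argue contrapositively: if both $|A+A|$ and $|A^2+A^2|$ were smaller than $|A|^{1+\epsilon/3}$, then the Plünnecke/Ruzsa calculus (Lemma \ref{lmt1} and Lemma \ref{lmt2}) gives $|A^2 + (A-B)^2| \le |A^2 + A^2 + B^2| \ll |A|^{1+O(\epsilon)}$-ish bounds on all the relevant sumsets, and plugging these into the \emph{Cauchy--Schwarz lower bound} $E \ge |A|^2|B|^2 / |A^2+(A-B)^2|$ produces $E \gg |A|^{3-\epsilon}|B|^2$ with a strictly better constant/exponent, contradicting the hypothesis once the bookkeeping of exponents is done. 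So the logical skeleton is: assume the conclusion fails $\Rightarrow$ all sumsets of $A$ and $A^2$ are small $\Rightarrow$ $A^2+(A-B)^2$ is small $\Rightarrow$ $E$ is large $\Rightarrow$ contradiction with the stated energy bound.

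I expect the main obstacle to be the exponent bookkeeping in the middle step: one must control $|A^2 + (A-B)^2|$ in terms of $|A+A|$ and $|A^2+A^2|$ alone, which requires expanding $(A-B)^2 = (A - A - A)^2$ and repeatedly applying the Ruzsa triangle inequality and Lemma \ref{lmt2} to peel off the squarings, and it is delicate to do this without losing more than a factor of $|A|^{\epsilon/3}$ — this is precisely why the denominator $3$ appears in $|A|^{1+\epsilon/3}$. A secondary technical point is ensuring the sets produced by Lemma \ref{lmt2} (the refined subsets $X' \subset X$) can be chosen compatibly across the several applications, so that the final chain of inequalities is genuinely about the same large subset of $A$; this is routine but must be stated carefully. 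Once those two points are handled, the contradiction is immediate and the lemma follows.
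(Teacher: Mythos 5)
Your proposal is not the paper's argument, and the contrapositive route you sketch has a gap that I do not think can be repaired with the tools you invoke. The plan requires showing that if $|A+A|$ and $|A^2+A^2|$ are both at most $|A|^{1+\epsilon/3}$ then $|A^2+(A-B)^2|\ll|A|^{1+O(\epsilon)}$; you flag this as a bookkeeping issue to be handled by ``repeatedly applying the Ruzsa triangle inequality and Lemma \ref{lmt2} to peel off the squarings.'' But Lemmas \ref{lmt1} and \ref{lmt2} are purely additive and do not interact with the squaring map at all: $(A-B)^2=(A-A-A)^2$ is the \emph{image} of a three-fold difference set under $x\mapsto x^2$, and expanding $(a'-b)^2=a'^2-2a'b+b^2$ exposes a genuine product term $2a'b$ that neither $|A+A|$ nor $|A^2+A^2|$ controls. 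Bounding $|A^2+(A-B)^2|$ by $|A|^{1+O(\epsilon)}$ under those two hypotheses is essentially a sum--product statement in its own right, so one would be assuming close to what one is trying to prove. The long detour through Rudnev's point--plane incidence theorem in your first two paragraphs is explicitly discarded in your own outline and is not needed anywhere. There is also a small bookkeeping slip: the paper's $E(A^2,(A-B)^2)$ counts $6$-tuples in $(A\times A\times B)^2$ with multiplicity, so it equals $\sum_t N(t)^2$ where $N(t)=|\{(a_1,a_2,b_1)\in A\times A\times B : a_1^2+(a_2-b_1)^2=t\}|$, not the additive energy $\sum_t r(t)^2$ of the two squared sets.

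The missing ingredient is the observation that one never needs to bound $|A^2+(A-B)^2|$. Every triple $(a,b,c)\in A^3$ gives a solution $(x,y,z)=(a,\,b+c,\,c)$ of $x^2+(y-z)^2=t$ with $x,z\in A$, $y\in B$, and --- crucially --- $t=a^2+b^2\in A^2+A^2$. Hence $\sum_{t\in A^2+A^2}N(t)\ge|A|^3$, and Cauchy--Schwarz applied over the set $A^2+A^2$ alone gives
\[
|A|^6\le|A^2+A^2|\sum_t N(t)^2=|A^2+A^2|\,E\bigl(A^2,(A-B)^2\bigr)\le|A^2+A^2|\,|A+A|^2\,|A|^{3-\epsilon},
\]
from which $\max\{|A+A|,|A^2+A^2|\}^3\gg|A|^{3+\epsilon}$ follows directly, with no contrapositive, no incidence theorem, and no Pl\"unnecke calculus. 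Restricting the Cauchy--Schwarz to this diagonal image is exactly what lets the argument close; without it, one is forced into the unbounded quantity $|A^2+(A-B)^2|$ as you found.
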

\begin{proof}
We consider the equation
\begin{equation}\label{eqx9}x^2+(y-z)^2=t,\end{equation}
where $x\in A, y\in B, z\in A, t\in A^2+A^2$.

It is clear that for any triple $(a, b, c)\in A^3$, we have a solution $(a, b+c, c, a^2+b^2)\in A\times B\times A\times (A^2+A^2)$ of
the equation (\ref{eqx9}). By the Cauchy-Schwarz inequality, we have
\[|A|^6\le |A^2+A^2|\cdot E(A^2, (A-B)^2)\le |A^2+A^2||A+A|^2|A|^{3-\epsilon},\]
which implies that
\[\max\left\lbrace |A+A|, |A^2+A^2|\right\rbrace \gg |A|^{1+\frac{\epsilon}{3}}.\]
This concludes the proof of the lemma.
\end{proof}
\bigskip
In this section, without loss of generality, we assume that for any subset $A'\subset A$ with $|A'|\gg |A|$, we have
\[E\left(A'^2,(A'-B)^2\right)\ge |A'|^{3-\epsilon}|B|^2,\]
otherwise, we are done by Lemma \ref{thm21}.
\bigskip
\begin{lemma}\label{lmx}
For $A\subset \mathbb{F}_q,$ set $B=A+A$. Suppose $E\left(A^2,(A-B)^2\right)\ge |A|^{3-\epsilon}|B|^2$. Then there exist  subsets $X\subset A$ and $Y\subset B$ with $|X|\gg |A|^{1-\epsilon}, |Y|\gg |B|^{1-\epsilon}$ such that the following holds:
\begin{itemize}
\item For any $b\in Y,$  $90\%$ of $(A-b)^2$ can be covered by at most $|(A-b_1)^2|^{\epsilon}\sim |A|^{\epsilon}$ translates of $-A^2$.
\item For any $a\in X$, $90\%$ of $(a-B)^2$ can be covered by at most $|A|^{\epsilon}$ translates of $-A^2$.
\end{itemize}
\end{lemma}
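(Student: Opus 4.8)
The plan is to derive both covering statements from Lemma~\ref{cover} applied to the pairs $\big((A-b)^2,-A^2\big)$ and $\big((a-B)^2,-A^2\big)$; the whole content then reduces to the two sumset bounds
\[
|(A-b)^2+A^2|\ll|A|^{1+\epsilon}\ \ (b\in B),\qquad |(a-B)^2+A^2|\ll|A|^{1+\epsilon}\ \ (a\in A),
\]
after which Lemma~\ref{cover} gives that $90\%$ of $(A-b)^2$ (resp.\ of $(a-B)^2$) is covered by $\ll|(A-b)^2+A^2|/|A^2|\ll|A|^{\epsilon}$ translates of $-A^2$, using $|A^2|\gg|A|$. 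Since these bounds will hold for every $b\in B$ and every $a\in A$, one may in fact take $X=A$ and $Y=B$; if one wants genuinely proper subsets (e.g.\ carrying extra properties for later use) a dyadic pigeonhole on $E(A^2,(A-B)^2)\ge|A|^{3-\epsilon}|B|^2$ — which via the trivial bound $E\le 2|A|^4|B|$ also forces $|A+A|\ll|A|^{1+\epsilon}$ — isolates a set of popular $b$'s of density $\gg|A|^{-\epsilon}$, and an analogous decomposition over the first coordinate isolates the corresponding $X$.

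The key point behind the two sumset bounds is that $B=A+A$, so that $bA=b_1A+b_2A\subseteq A\cdot A+A\cdot A=A^2+A^2$ whenever $b=b_1+b_2\in B$, and $aB=aA+aA\subseteq A^2+A^2$ whenever $a\in A$. Expanding $(a-b)^2=a^2-2ab+b^2$ therefore gives $(A-b)^2+A^2\subseteq b^2+(A^2+A^2)+(-2bA)\subseteq b^2+(A^2+A^2)+(-2)(A^2+A^2)$, a translate of a bounded-length sum of fixed dilates of $A^2$; likewise, using $ab\in A^2+A^2$ and $b^2\in(A+A)(A+A)\subseteq A^2+A^2+A^2+A^2$, one finds $(a-B)^2+A^2$ inside a translate of such a sum as well (in characteristic $2$ the cross term vanishes and one simply gets $(A-b)^2+A^2\subseteq b^2+A^2+A^2$ and $(a-B)^2+A^2\subseteq a^2+A^2+A^2+A^2+A^2$). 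Since any fixed dilate $\lambda A^2$ sits inside a bounded sum of copies of $\pm A^2$, Lemma~\ref{lmt1} bounds the size of such an iterated sumset by $\big(|A^2+A^2|/|A^2|\big)^{O(1)}|A^2|$; in the situation where Theorem~\ref{thm2} is not already proved outright we have $|A^2+A^2|\ll|A|^{1+O(\epsilon)}$, so, combined with $|A^2|\gg|A|$, this is $\ll|A|^{1+O(\epsilon)}$, which is the desired bound after relabelling $\epsilon$.

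The step I expect to demand the most care is this last Plünnecke–Ruzsa bookkeeping: one must ensure the number of dilated summands of $A^2$ is an absolute constant and keep track of the exponent $O(1)$ on $|A^2+A^2|/|A^2|$, so that the final number of translates genuinely comes out as $|A|^{\epsilon}$ (up to an absolute constant) rather than a larger power of $|A|$; the harmless scalar $-2$ and the passage between $\lambda A^2$ and sums of $\pm A^2$ (via the second inequality of Lemma~\ref{lmt1} for $|A^2-A^2|$) are exactly where that exponent gets pinned down. A secondary subtlety worth spelling out is precisely why the smallness of $|A^2+A^2|$ — and hence of the number of translates — is available in the context in which Lemma~\ref{lmx} is invoked.
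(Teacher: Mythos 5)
Your proposal has a genuine gap rooted in a notational confusion. In this paper $A^2$ denotes the set of squares $\{x^2: x\in A\}$, not the product set $A\cdot A$. Your key containment step reads ``$bA=b_1A+b_2A\subseteq A\cdot A+A\cdot A=A^2+A^2$,'' but $A\cdot A\neq A^2$; the dilate $b_1A$ sits in the product set, not in the set of squares, so $bA\subseteq A^2+A^2$ is false in general. The same confusion underlies the claimed containment $b^2\in(A+A)(A+A)\subseteq A^2+A^2+A^2+A^2$. The algebraically correct way to relate a dilate to squares is the identity $2ab=a^2+b^2-(a-b)^2$, which gives $2bA\subseteq b^2+A^2-(A-b)^2$. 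But substituting this into your program to bound $|(A-b)^2+A^2|$ is circular: you would first need to know that $(A-b)^2$ sits inside few translates of $\pm A^2$, which is precisely the conclusion you are trying to reach via Lemma~\ref{cover}.

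This is exactly the circle the paper's proof is designed to break, and it does so by using the energy hypothesis $E(A^2,(A-B)^2)\ge|A|^{3-\epsilon}|B|^2$ as the engine of the covering rather than as a side remark. By pigeonhole there is a set $Y\subset B$ of size $\gg|B|^{1-\epsilon}$ whose elements $b_1$ each support $\ge|A|^{3-\epsilon}|B|$ solutions of $a_1^2+(a_2-b_1)^2=a_3^2+(b-a_4)^2$; averaging over the $O(|A|^2|B|)$ choices of $(a_3,a_4,b)$ then produces one translate of $-A^2$ meeting $(A-b_1)^2$ in $\gg|A|^{1-\epsilon}$ elements, and a greedy removal argument bounds the number of translates needed to cover $90\%$ by $\ll|A|^{\epsilon}$. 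The same is done on the first coordinate to produce $X\subset A$. Note also that your plan tries to establish the covering for \emph{every} $b\in B$ and $a\in A$ and then says one could take $X=A$, $Y=B$; that is strictly stronger than the lemma and is not something the energy hypothesis supplies — the whole point of extracting proper subsets $X,Y$ of density $\gg|A|^{-\epsilon}$ is that the covering can fail off those subsets. Finally, your appeal to ``$|A^2+A^2|\ll|A|^{1+O(\epsilon)}$ if the theorem is not already proved'' is an extra assumption not contained in the lemma's hypothesis and not used by the paper; the paper proves the lemma from the energy bound alone.
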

\begin{proof}
Since $E\left(A^2,(A-B)^2\right)\ge |A|^{3-\epsilon}|B|^2,$ there exists a set $Y\subset B$ with $|Y|\gg |B|^{1-\epsilon}$ such that for any $b_1\in Y$, the number of $5$-tuples $(a_1, a_2, a_3, a_4, b)\in A^4\times B$ satisfying the equation
\begin{equation}\label{eqx2}a_1^2+(a_2-b_1)^2=a_3^2+(b-a_4)^2\end{equation}
is at least $|A|^{3-\epsilon}|B|$.

We now show that for any $b_1\in Y$, we can cover $90\%$ of $(A-b_1)^2$ by at most $|A|^{\epsilon}$ translates of $-A^2$. It suffices to show that we can find one translate of $-A^2$ such that the intersection of  $(A-b_1)^2$ and that translate is of size at least $|A|^{1-\epsilon}\sim |(A-b_1)^2|^{1-\epsilon}$. When we find such a translate, we remove the intersection and then repeat the process until the size of the remaining part of $(A-b_1)^2$ is less than $|(A-b_1)^2|/10$. 

Indeed, the number of solutions of the equation (\ref{eqx2}) is at least $|A|^{3-\epsilon}|B|$, and thus there exist $b\in B$ and $a_3, a_4\in A$ such that
\[|(A-b_1)^2\cap (-A^2+a_3^2+(b-a_4)^2)|\gg \frac{|A|}{|A|^{\epsilon}}\gg |(A-b_1)^2|^{1-\epsilon}.\]
Hence, there is a translate of $-A^2$ such that it intersects $(A-b_1)^2$ in at least $\gg |(A-b_1)^2|^{1-\epsilon}$ elements.

In the next step, we are going to show that there is a subset $X$ of $A$ with $|X|\gg |A|^{1-\epsilon}$ such that for any $a_4\in X$, we can cover $90\%$ of $(B-a_4)^2$ by at most $|A|^{\epsilon}$ translates of $-A^2$. It suffices to show that we can find one translate of $-A^2$ such that the intersection of  $(B-a_4)^2$ and that translate is of size at least $|B||A|^{-\epsilon}\gg |(B-a_4)^2||A|^{-\epsilon}$. When we find such a translate, we remove the intersection and then repeat the process until the size of the remaining part of $(B-a_4)^2$ is less than $|(B-a_4)^2|/10$. 

Since $E(A^2, (A-B)^2)\gg |A|^{3-\epsilon}|B|^2$, there is a subset $A'\subset A$ with $|A'|\gg |A|^{1-\epsilon}$ such that, for each $a_4\in A'$,  the number of solutions of the equation
\begin{equation}\label{eqx22}a_1^2+(a_2-b_1)^2=a_3^2+(b-a_4)^2\end{equation}
is at least $|A|^{2-\epsilon}|B|^2$. Hence, there exist $a_2, a_1\in A$ and $b_1\in B$ such that 
\[|(-A^2+(a_2-b_1)^2+a_1^2)\cap (B-a_4)^2|\gg \frac{|B|}{|A|^{\epsilon}}.\]

Thus there is a translate of $-A^2$ that intersects with $(B-a_4)^2$ in at least $|B|/|A|^{\epsilon}$ elements.
\end{proof}
we now are ready to prove Theorem \ref{thm2}.
\bigskip
\begin{proof}[Proof of Theorem \ref{thm2}]
By employing Lemma \ref{lmt2}, without loss of generality, we can suppose that $A$ satisfies the following inequality 
\[|A^2+A^2+A^2|\ll \frac{|A^2+A^2|^2}{|A|}.\]

Let $\epsilon>0$ be a parameter which will be chosen at the end of the proof.  Let $X$ and $Y$ be sets defined as in Lemma \ref{lmx}. For the simplicity, we assume that $|X|=|A|^{1-\epsilon}$ and $|Y|=|A-A|^{1-\epsilon}$. As in the proof of Theorem \ref{thm1}, we first define the ratio set:
\[R(X, Y):=\left\lbrace  \frac{b_1-b_2}{a_1-a_2}\colon a_1, a_2\in X,  b_1, b_2\in Y\right\rbrace.\]
We now consider the following cases:

{\bf Case 1:} $1+R(X, Y)\not\subset R(X, Y)$.

In this case, there exist $a_1, a_2\in X, b_1, b_2\in Y$ such that
\[r:=1+\frac{b_1-b_2}{a_1- a_2}\not\in R(X, Y).\]

Applying Lemma \ref{lmx}, we can find subsets $X_1\subset X$ and $Y_1\subset Y$ with $|X_1|\gg |X|$, $|Y_1|\gg |Y|$ such that $(X_1-b_1)^2, (X_1-b_2)^2, (Y_1-a_1)^2, (Y_1-a_2)^2$ can be covered by at most $|A|^\epsilon$ translates of $-A^2$.

One can apply Lemma \ref{cover} four times to obtain subsets $X_2\subset X_1, Y_2\subset Y_1$ with $|X_2|\gg |X_1|, |Y_2|\gg |Y_1|$ such that $2a_1Y_2$ can be covered by at most
\[\frac{|2a_1Y_2+A^2-a_1^2|}{|A|}\le \frac{|(Y_2-a_1)^2-A^2-A^2|}{|A|}\]
translates of $A^2$,  $-2a_2Y_2$ can be covered by at most
\[\frac{|-2a_1Y_2-A^2+a_2^2|}{|A|}\le \frac{|(Y_2-a_2)^2-A^2-A^2|}{|A|}\]
translates of $A^2$,  $-2b_2X_2$ can be covered by at most
\[\frac{|-2b_2X_2-A^2+b_2^2|}{|A|}\le \frac{|(X_2-b_2)^2-A^2-A^2|}{|A|}\]
translates of $A^2$, and $2b_1X_2$ can be covered by at most
\[\frac{|2b_1X_2+A_1^2-b_1^2|}{|A_1|}\le \frac{|(X_2-b_1)^2-A^2-A^2|}{|A|}\]
translates of $A_1^2$, where $A_1\subset A$ with $|A_1|\gg |A|$ and
\[|A_1^2+A^2+A^2+A^2|\ll \frac{|A^2+A^2|^3}{|A|^2},\]
which can be obtained by using Lemma \ref{lmt2}.

Applying Lemma \ref{lmt2} again, we see that there exists a subset $Y_3\subset Y_2$ such that $|Y_3|\gg |Y_2|$ and
\begin{align}\label{eq-o-1}
|(a_1-a_2)X_2+(b_1-b_2)X_2+(a_1-a_2)Y_3| &\ll \frac{|X_2+Y_2||(b_1-b_2)X_2+(a_1-a_2)Y_2|}{|Y_2|}\nonumber\\
&\ll \frac{|A+A+A||(b_1-b_2)X_2+(a_1-a_2)Y_2|}{|Y|}\nonumber\\
&\ll \frac{|A+A|^3}{|A|^2}\cdot \frac{|(b_1-b_2)X_2+(a_1-a_2)Y_2|}{|Y|}.
\end{align}
On the other hand, we also have 
\begin{equation}\label{eq-o-2}|(a_1-a_2)X_2+(b_1-b_2)X_2+(a_1-a_2)Y_3| \geq |rX_2+Y_3|.\end{equation}
Since $r\not\in R(X, Y)$, the equation
\[a_1-a_2=r(b_1+rb_2)\]
has no non-trivial solutions.  This gives us
\begin{equation}\label{eq-o-3}|rX_2+Y_3|=|X_2||Y_3|\gg |X||Y|. \end{equation}
We now estimate $(b_1-b_2)X_2+(a_1-a_2)Y_2$ as follows.

First we note that
\[|(b_1-b_2)X_2+(a_1-a_2)Y_2|=|2b_1X_2-2b_2X_2+2a_1Y_2-2a_2Y_2|.\]

Since $2b_1X_2$ can be covered by at most $|(X_1-b_1)^2+A^2-A^2|/|A|$ copies of $A_1^2$, $2b_2X_1$ can be covered by at most $|(X_1-b_2)^2+A^2-A^2|/|A|$ copies of $A^2$, $2a_1Y_1$ can be covered by at most $|(Y_1-a_1)^2+A^2-A^2|/|A|$ copies of $A^2$, and $2a_2Y_1$ can be covered by at most $|(Y_1-a_2)^2+A^2-A^2|/|A|$ copies of $A^2$, we have
\begin{align}\label{eq-o-4}
&|(b_1-b_2)X_2+(a_1-a_2)Y_2|\ll \\&\ll \frac{|(X_1-b_1)^2-A^2-A^2||(X_1-b_2)^2-A^2-A^2|}{|A|^4}\nonumber\\&\times |(Y_1-a_1)^2-A^2-A^2||(Y_1-a_2)^2-A^2-A^2| |A_1^2+A^2+A^2+A^2|\nonumber\\
&\le  \frac{|A^2+A^2|^3}{|A|^{6-4\epsilon}} |-A^2-A^2-A^2|^4\nonumber\\
&\le \frac{|A^2+A^2|^{11}}{|A|^{10-4\epsilon}},\nonumber
\end{align}
where we have used the fact that $(X_1-b_1)^2, (X_1-b_2)^2, (Y_1-a_1)^2, (Y_1-a_2)^2$ can be covered by at most $|A|^\epsilon$ translates of $-A^2$.

Putting (\ref{eq-o-1}-\ref{eq-o-4}) together, we obtain
\[|A+A|^3|A^2+A^2|^{11}\gg |A|^{15-5\epsilon}.\]
{\bf Case 2:} $Y \cdot R(X, Y)\not\subset R(X, Y)$.
Similarly, in this case, there exist $a_1, a_2 \in X,  b, b_1, b_2\in Y$ such that
\[r:=b\cdot \frac{b_1-b_2}{a_1-a_2}\not\in R(X, Y).\]
Since $0\in R(X, Y)$, we see that $b\ne 0$, and $b_1\ne b_2$. This tells us that $r^{-1}$ exists.

Let $X_2$ and $Y_2$ be sets defined as in Case $1$.

We use Lemma \ref{cover} to obtain a set $X_3\subset X_2$ with $|X_3|\gg |X_2|$ such that $2bX_3$ can be covered by at most $|(X_3+b)^2-A^2-A^2|/|A|$ translates of $|A|^2$.

Moreover, one also has
\begin{align}\label{eq14142}
|X||Y|\ll |rX_3+Y_2|&\ll  \frac{|X_2+bX_3||(a_1-a_2)Y_2+(b_1-b_2)X_2|}{|X|}\\
&\ll |X_2+bX_3|\cdot \frac{|A^2+A^2|^{11}}{|A|^{10-4\epsilon}|X|}\nonumber.
\end{align}

Since $-2bX_3$ can be covered by at most $|(X_3-b)^2-A^2-A^2|/|A|$ translates of $-A^2$, we have

\[|X_2+bX_3|=|-2X_2-2bX_3|\ll \frac{|(X_3-b)^2-A^2-A^2|}{|A|}\cdot |-2X_2-A^2|\le \frac{|A^2+A^2+A^2|}{|A|^{1-\epsilon}} \cdot |-2X_2-A^2|,\]
where we used the fact that $(X_3-b)^2$ can be covered by at most $|A|^\epsilon$ translates of $-A^2$. Note that it follows from the proof of Lemma \ref{lmx} that we can assume that $1\in Y$ by scaling the set $A$. Therefore, we can bound $|A^2-2X_2|$ as follows
\[|-A^2-2X_2|\ll |-A^2-2A+1|\le |(A-1)^2-A^2-A^2|\le |A|^{\epsilon} |A^2+A^2+A^2|.\]
In other words, we have indicated that
\begin{equation}\label{eq14143}|X_2+bX_3|\ll \frac{|A^2+A^2+A^2|^2}{|A|^{1-2\epsilon}}\ll \frac{|A^2+A^2|^4}{|A|^{3-2\epsilon}},\end{equation}
since we have assumed that $|A^2+A^2+A^2|\ll |A^2+A^2|^2/|A|$.  Putting (\ref{eq14142}) and (\ref{eq14143}) together, we obtain
\[|A^2+A^2|^{15}\gg |A|^{16-9\epsilon}.\]

{\bf Case 3:} $Y^{-1}\cdot R(X,  Y)\not\subset R(X, Y)$.

As above, in this case, there exist $a_1, a_2 \in X,  b_1, b_2, b\in Y, b\ne 0$ such that
\[r:=b^{-1}\cdot \frac{b_1-b_2}{a_1-a_2}\not\in R(X, Y).\]
As in Case $2$, we see that $r^{-1}$ exists, and one can use the same argument to show that
\[|A^2+A^2|^{15}\gg |A|^{16-9\epsilon}.\]
{\bf Case 4:} We now consider the last case
\begin{align}
1+R(X, Y)&\subset R(X, Y)\\
Y\cdot R(X, Y)&\subset R(X, Y)\\
Y^{-1}\cdot R(X, Y)&\subset R(X, Y).
\end{align}
In the next step, we prove that for any polynomial $F(x_1, \ldots, x_n)$ in $n$ variables with integer coefficients, we have
\[F(Y, \ldots, Y)+R(X, Y)\subset R(X, Y).\]
Indeed, it is sufficient to prove that
\[1+R(X, Y)\subset R(X, Y), ~Y^m+R(X, Y)\subset R(X, Y),\]
for any integer $m\ge 1$, and $Y^m=Y\cdots Y$ ($m$ times).

It is clear that the first requirement $1+R(X, Y)\subset R(X, Y)$ is satisfied.   For the second requirement, it is sufficient to prove it for $m=2$, since one can use inductive arguments for larger $m$.

Let $y, y'$ be arbitrary elements in $Y$. We now show that
\[yy'+R(X, Y)\subset R(X, Y).\]
If either $y=0$ or $y'=0$, then we are done. Thus we can assume that $y\ne 0$ and $y'\ne 0$.

First we have
\[y+R(X, Y)=y(1+y^{-1}R(X, Y))\subset y(1+R(X, Y))\subset R(X, Y),\]
and
\[yy'+R(X, Y)=y(y'+y^{-1}R(X, Y))\subset y(y'+R(X, Y))\subset yR(X, Y)\subset R(X, Y).\]

In other words, we have proved that for any polynomial $F(x_1, x_2, \ldots, x_n)$ in some variables with integer coefficients, we have
\[F(Y, \ldots, Y)+R(X, Y)\subset R(X, Y).\]
On the other hand, Lemma \ref{lmt3} gives us that  there exists a polynomial $P$ such that \[P(Y, \ldots, Y)=\mathbb{F}_Y.\] This follows that
\[\mathbb{F}_Y+R(X, Y)\subset R(X, Y).\]

It follows from the assumption of the theorem that
\[|Y|=|Y\cap \mathbb{F}_Y|\le |\mathbb{F}_Y|^{1/2}.\]
Hence, $|R(X, Y)|\ge |\mathbb{F}_Y|\ge |Y|^2$.

Next we will show that there exists $r\in R(X, Y)$ such that either
\[|Y+rX|\ge |Y||X|/2,\]
or
\[|Y+rX|\ge |Y|^2/2.\]
Recall that the sum $\sum_{r\in R(X, Y)}E^+(Y, rX)$ is the number of tuples $(a_1, a_2, b_1, b_2)\in X^2\times Y^2$ such that
\[b_1+ra_1=b_2+ra_2\]
with $a_1, a_2\in X$, $b_1, b_2\in Y$ and $r\in R(X, Y)$. It is clear that there are at most $|R(X, Y)||X||Y|$ tuples with $a_1=a_2, b_1=b_2$, and at most $|X|^2|Y|^2$ tuples with $b_1\ne b_2$. Therefore, we get
\[\sum_{r\in R(X, Y)}E^+(rX, Y)\le |R(X, Y)||X||Y|+|X|^2|Y|^2\le |R(X, Y)||X||Y|+|X|^2|R(X, Y)|.\]
Hence, there exists $r\in R(X, Y)$ such that either $E^+(rX, Y)\le 2|X||Y|$ or $E^+(rX, Y)\le 2|X|^2$. This implies that either
\[|Y+rX|\ge |Y||X|/2,\]
or
\[|Y+rX|\ge |Y|^2/2.\]
Put $r=(b_1-b_2)/(a_1-a_2)$. Let $X_2$ and $Y_2$ be sets defined as in  Case $1$. Note that we can always assume that $|X_2|\ge 9|X|/10$ and $|Y_2|\ge 9|Y|/10$. Thus
\[|Y+r(X\setminus X_2)|+|(Y\setminus Y_2)+rX_2|\le |X||Y|/5.\]

It follows from our assumption that $|X|=|A|^{1-\epsilon}$ and $|Y|=|A-A|^{1-\epsilon}$, we can assume that 
\[|Y_2+rX_2|\gg |X||Y|,\]
or
\[|Y_2+rX_2|\gg |Y|^2.\]

As in Case $1$, we have
\[|Y_2+rX_2|\ll \frac{|A^2+A^2|^{11}}{|A|^{10-4\epsilon}}.\]

In short, we have

\[|A^2+A^2|\gg |A|^{1+\frac{1-6\epsilon}{11}}.\]

Choose $\epsilon=3/42$, the theorem follows directly from Cases ($1$)-($3$) and Lemma \ref{thm21}.
\end{proof}

\section*{Acknowledgments}
D. Koh was supported by Basic Science Research Program through the National
Research Foundation of Korea(NRF) funded by the Ministry of Education, Science
and Technology(NRF-2018R1D1A1B07044469). T. Pham was supported by Swiss National Science Foundation grant P2ELP2175050. C-Y Shen was supported in part by MOST, through grant 104-2628-M-002-015 -MY4.
 
\end{document}